\begin{document}
\title[On some Lie groups as 5-dimensional \ldots]
{On some Lie groups as 5-dimensional almost contact B-metric
manifolds with three natural connections}

\author{Miroslava Ivanova}

\address[Miroslava Ivanova]{Department of Informatics and Mathematics, Trakia University,
Stara Zagora, 6000, Bulgaria,
E-mail: mivanova@uni-sz.bg}

\author{Hristo Manev}

\address[Hristo Manev]{Department of Pharmaceutical Sciences, Medical University of
Plovdiv,
Plovdiv, 4002, Bulgaria;
Departament of Algebra and Geometry, University of Plovdiv,
Plovdiv, 4027, Bulgaria, %
E-mail: hmanev@uni-plovdiv.bg}

%%%% Local Definitions start here
\newcommand{\ie}{i.e. }
\newcommand{\Id}{\mathrm{Id}}
\newcommand{\X}{\mathfrak{X}}
\newcommand{\W}{\mathcal{W}}
\newcommand{\F}{\mathcal{F}}
\newcommand{\T}{\mathcal{T}}
\newcommand{\LL}{\mathcal{L}}
\newcommand{\TT}{\mathfrak{T}}
\newcommand{\G}{\mathcal{G}}
\newcommand{\I}{\mathcal{I}}
\newcommand{\M}{(M,\allowbreak{}\ff,\allowbreak{}\xi,\allowbreak{}\eta,\allowbreak{}g)}
\newcommand{\Lf}{(G,\ff,\xi,\eta,g)}
\newcommand{\R}{\mathbb{R}}
\newcommand{\N}{\widehat{N}}
\newcommand{\s}{\mathfrak{S}}
\newcommand{\n}{\nabla}
\newcommand{\nn}{\tilde{\nabla}}
\newcommand{\ff}{\varphi}
\newcommand{\D}{{\rm d}}
\newcommand{\id}{{\rm id}}
\newcommand{\al}{\alpha}
\newcommand{\bt}{\beta}
\newcommand{\gm}{\gamma}
\newcommand{\dt}{\delta}
\newcommand{\lm}{\lambda}
\newcommand{\ta}{\theta}
\newcommand{\om}{\omega}
\newcommand{\ea}{\varepsilon_\alpha}
\newcommand{\eb}{\varepsilon_\beta}
\newcommand{\eg}{\varepsilon_\gamma}
\newcommand{\sx}{\mathop{\mathfrak{S}}\limits_{x,y,z}}
\newcommand{\norm}[1]{\left\Vert#1\right\Vert ^2}
\newcommand{\nf}{\norm{\n \ff}}
\newcommand{\nN}{\norm{N}}
\newcommand{\Span}{\mathrm{span}}
\newcommand{\grad}{\mathrm{grad}}
\newcommand{\thmref}[1]{The\-o\-rem~\ref{#1}}
\newcommand{\propref}[1]{Pro\-po\-si\-ti\-on~\ref{#1}}
\newcommand{\secref}[1]{\S\ref{#1}}
\newcommand{\lemref}[1]{Lem\-ma~\ref{#1}}
\newcommand{\dfnref}[1]{De\-fi\-ni\-ti\-on~\ref{#1}}
\newcommand{\corref}[1]{Corollary~\ref{#1}}
%\newcommand{\eqref}[1]{(\ref{#1})}

%\renewcommand{\thefootnote}{\fnsymbol{footnote}}

% THEOREM Environments ---------------------------------------------------
%\newtheorem{thm}{Theorem}
%\newtheorem{lem}[thm]{Lemma}
%\newtheorem{prop}[thm]{Proposition}
%\newtheorem{cor}[thm]{Corollary}
%\newdefinition{rmk}{Remark}
%\newdefinition{ack}{Acknowledgements}
%\newproof{pf}{Proof}
%\newproof{pot}{Proof of Theorem \ref{thm-geom}}

\numberwithin{equation}{section}
\newtheorem{thm}{Theorem}[section]
\newtheorem{lem}[thm]{Lemma}
\newtheorem{prop}[thm]{Proposition}
\newtheorem{cor}[thm]{Corollary}
\newtheorem{defn}{Definition}[section]

\hyphenation{Her-mi-ti-an ma-ni-fold ah-ler-ian}

\begin{abstract}
Almost contact manifolds with B-metric are considered. 
There are studied three natural connections (i.e. linear connections preserving the
structure tensors) determined by conditions for their torsions. These connections
are investigated on a family of Lie groups considered as 5-dimensional almost
contact B-metric manifolds.
\end{abstract}

\keywords{Almost contact manifold; B-metric;
natural connection; $\ff$KT-connection; $\ff$B-connection;
$\ff$-canonical connection; totally real section; holomorphic section; Lie group; Lie algebra.}

\maketitle

\section*{Introduction}

In differential geometry of manifolds with additional tensor
structure, there are important the so-called \emph{natural
connections}, \ie linear connections with respect to which the
structure tensors of the manifolds are parallel.

The natural connections are studied in the geometry of manifolds
with some additional structures: almost Hermitian structure \cite{Ko-No},
almost contact metric structure \cite{Ale-Gan2},
almost complex structure with Norden metric \cite{Gan-Mi}.

The geometry of almost contact B-metric manifolds is the geometry
of the structures $\ff,\xi,\eta,g$ and $\tilde{g}$. For this
geometry a natural connection $D$ is parallel with respect to
$\ff$ and $g$ (consequently, $\xi,\eta$ and $\tilde{g}$). By
$\F_0$ is denoted the class of the considered manifolds with
parallel structure $\ff$ with respect to the Levi-Civita
connection $\n$. The natural connections play the same role
outside the class $\F_0$ such a role plays $\n$ in $\F_0$.

Three natural connections ($\ff$KT-connection, $\ff$B-connection
and $\ff$-canonical connection) on almost contact B-metric
manifolds are an object of special interest in this paper. The
goal of the present work is the investigation of those three
natural connections on a family of Lie groups as 5-dimensional
almost contact B-metric manifolds belonging to a basic class.

The paper is organized as follows. In Sec.~\ref{mi:sec2}, we give
necessary facts about considered manifolds. In Sec.~\ref{mi:sec3},
we consider an example on a family of Lie groups as 5-dimensional
$\F_7$-manifolds with a parallel torsion of the
$\ff$KT-connection. In Sec.~\ref{mi:sec4}, we describe the
$\ff$B-connection and the $\ff$-canonical connection on these
manifolds.

\section{Almost contact manifolds with B-metric}\label{mi:sec2}

Let $(M,\ff,\xi,\eta,g)$ be a $(2n+1)$-dimensional \emph{almost
contact B-met\-ric manifold}. In details, $(\ff,\xi,\eta)$ is an
almost contact structure determined by a tensor field $\ff$ of
type (1,1), a vector field $\xi$ and its dual 1-form $\eta$ as
follows:
\begin{equation*}
\ff\xi=0, \quad \ff^2 = -\Id + \eta \otimes \xi, \quad
\eta\circ\ff=0, \quad \eta(\xi)=1,
\end{equation*}
where $\Id$ is the identity.
Moreover, $g$ is a pseudo-Riemannian metric such that for
any differentiable
vector fields $x$, $y$ on $M$, it is valid: \cite{GaMiGr}
\begin{equation*}
g(\ff x,\ff y)=-g(x,y)+\eta(x)\eta(y).
\end{equation*}

Further, $x$, $y$, $z$, $w$ will stand for arbitrary elements of
$\X(M)$ or vectors of the tangent space at arbitrary point in $M$.

Let us remark that the restriction of a B-metric on the contact
distribution $H=\ker(\eta)$ coincide with the corresponding Norden metric with respect
to the almost complex structure (the restriction of $\ff$ on $H$)
acting as an anti-isometry on the metric (the restriction of $g$) of $H$.
Thus, it is obtained  a correlation
between a $(2n+1)$-dimensional almost contact B-metric manifold
and a $2n$-dimensional almost complex manifold with Norden metric
(or an $n$-dimensional complex Riemannian manifold).

The associated metric $\tilde{g}$ of $g$ on $M$ given by \(
\tilde{g}(x,y)=g(x,\ff y)+\eta(x)\eta(y)\) is a B-metric, too. The
manifold $(M,\ff,\xi,\eta,\tilde{g})$ is also an almost contact
B-metric manifold. Both metrics $g$ and $\tilde{g}$ are indefinite
of signature $(n+1,n)$.% The Levi-Civita connection of $g$ and
%$\tilde{g}$ will be denoted by $\n$ and $\nn$, respectively.

The structure group of $\M$ is $\G\times\I$, where $\I$ is the
identity on $\Span(\xi)$ and $\G=\mathcal{GL}(n;\mathbb{C})\cap
\mathcal{O}(n,n)$.

Let $\n$ be the Levi-Civita connection of $g$. The tensor $F$ of
type (0,3) on $M$ is defined by $F(x,y,z)=g\bigl( \left( \n_x \ff
\right)y,z\bigr)$. It has the following properties:
\begin{equation*}\label{mi:F-prop}
%\begin{split}
F(x,y,z)=F(x,z,y)%\\[4pt]
=F(x,\ff y,\ff z)+\eta(y)F(x,\xi,z)%\\[4pt]
+\eta(z)F(x,y,\xi).
%\end{split}
\end{equation*}

A classification of the almost contact B-metric manifolds is
introduced in \cite{GaMiGr}, where eleven basic classes
$\F_i$ $(i=1,\allowbreak{}2,\allowbreak{}\dots,\allowbreak{}{11})$
of these manifolds are characterized with respect to $F$. These
basic classes intersect in the special class $\F_0$ determined by
the condition $F(x,y,z)=0$. Hence $\F_0$ is the class of almost
contact B-metric manifolds with $\n$-parallel structures, \ie
$\n\ff=\n\xi=\n\eta=\n g=\n \tilde{g}=0$.

Let $g_{ij}$,
$i,j\in\{1,2,\dots,2n+1\}$, are the components of the matrix of $g$ with respect
to a basis $\{e_i\}_{i=1}^{2n}=\{e_1,e_2,\dots,e_{2n+1}\}$ of the tangent
space $T_pM$ of $M$ at an arbitrary point $p\in M$, and $g^{ij}$
-- the components of the inverse  matrix of  $(g_{ij})$.

It is defined the \emph{square norm of $\nabla \ff$}
by: \cite{Man31}
\begin{equation*}\label{mi:snf}
    \norm{\nabla \ff}=g^{ij}g^{ks}
    g\bigl(\left(\nabla_{e_i} \ff\right)e_k,\left(\nabla_{e_j}
    \ff\right)e_s\bigr).
\end{equation*}
It is clear, the equality $\norm{\nabla \ff}=0$ is valid if $\M$
is a $\F_0$-manifold, but the inverse implication is not always
true. An almost contact B-metric manifold having a zero square
norm of $\n\ff$ is called an
\emph{isotropic-$\F_0$-manifold} \cite{Man31}.

The Nijenhuis tensor $N$ of the almost contact structure is
defined by $N = [\ff, \ff]+ \D{\eta}\otimes\xi$, where $[\ff,
\ff](x, y)=\left[\ff x,\ff
y\right]+\ff^2\left[x,y\right]-\ff\left[\ff
x,y\right]-\ff\left[x,\ff y\right]$ for
$\left[x,y\right]=\n_xy-\n_yx$ and $\D\eta$ is the exterior
derivative of $\eta$. In \cite{ManIv36}, it is defined an
associated Nijenhuis tensor $\widehat{N}$  by
$\widehat{N}=\{\ff,\ff\}+\left(\LL_{\xi}g\right)\otimes\xi$, where
$\{\ff,\ff\}(x, y)=\ff^2\{x,y\}+\{\ff x,\ff y\}-\ff\{\ff
x,y\}-\ff\{x,\ff  y\}$ for $\left\{x,y\right\}=\n_xy+\n_yx$ and
$\LL_{\xi}g$ is the Lie derivative with respect to $\xi$ of the
metric $g$.

Hence, $N$ and $\widehat{N}$ in terms of the covariant derivatives
has the following form:
\begin{equation*}\label{mi:N, hat N}
\begin{split}
N(x,y)&=\left(\n_{\ff x}\ff\right)y-\ff\left(\n_{x}\ff\right)y+\left(\n_{x}\eta\right)y\cdot\xi\\%[4pt]
& -\left(\n_{\ff
y}\ff\right)x+\ff\left(\n_{y}\ff\right)x-\left(\n_{y}\eta\right)x\cdot\xi,\\%[4pt]
%\end{split}
%\end{equation*}
%\begin{equation*}\label{mi:hat N}
%\begin{split}
\widehat{N}(x,y)&=\left(\n_{\ff x}\ff\right)y-\ff\left(\n_{x}\ff\right)y
+\left(\n_{x}\eta\right)y\cdot\xi\\%[4pt]
& +\left(\n_{\ff
y}\ff\right)x-\ff\left(\n_{y}\ff\right)x+\left(\n_{y}\eta\right)x\cdot\xi.
\end{split}
\end{equation*}

The corresponding tensors of type (0,3) on $\M$ are determined by
$N(x,y,z)=g\left(N(x,y),z\right)$ and $\widehat{N}(x,y,z)=g\left(
\widehat{N}(x,y),z\right)$.

In \cite{Man-Gri2}, a tensor $L$ of type (0,4) on $M$ with
properties
\begin{gather}
    L(x,y,z,w)=-L(y,x,z,w)=-L(x,y,w,z),\label{mi:L-curv-like-1}
    \\%[4pt]%
    L(x,y,z,w)+L(y,z,x,w)+L(z,x,y,w)=0 %\qquad \text{(the first Bianchi
    %identity)}.
    \label{mi:L-curv-like-2}
\end{gather}
is called a \emph{curvature-like tensor}.

The curvature tensor $R$ of $\n$ is determined by
$R(x,y)z=\n_x\n_yz-\n_y\n_xz-\n_{[x,y]}z$. The corresponding
tensor of $R$ of type (0,4) is denoted by the same letter and is
defined by the condition $R(x,y,z,w)=g(R(x,y)z,w)$.

The Ricci tensor $\rho$ and the scalar curvature $\tau$ for $R$
as well as their associated quantities are defined by the
traces $\rho(x,y)=g^{ij}R(e_i,x,y,e_j)$,
$\tau=g^{ij}\rho(e_i,e_j)$, $\rho^{*}(x,y)=g^{ij}R(e_i,x,y,\ff
e_j)$ and $\tau^{*}=g^{ij}\rho^{*}(e_i,e_j)$, respectively. In a
similar way there are determined the Ricci tensor $\rho(L)$, the
scalar curvature $\tau(L)$ and their associated
quantities for any curvature-like tensor $L$.

Let $\al$ be a non-degenerate 2-plane (section) in the tangent space $T_pM$, $p\in M$. 
The special 2-planes with respect to the almost
contact B-metric structure $(\ff,\xi,\eta,g)$ are:
\emph{totally real section} if $\al$ is orthogonal to its
$\ff$-image $\ff\al$ and $\xi$,
\emph{$\ff$-holomorphic  section} if $\al$ coincides with
$\ff\al$ and
\emph{$\xi$-section} if $\xi$ lies on $\al$ \cite{NakGri}.

The sectional curvature $k(\al; p)(L)$ of
$\al$ with an arbitrary basis $\{x,y\}$ at $p$
regarding a curvature-like tensor $L$ is defined by
\begin{equation}\label{mi:sec curv}
k(\al; p)(L)=\frac{L(x,y,y,x)}{g(x,x)g(y,y)-g(x,y)^2}.% 
\end{equation}

It is known, \cite{Man31}, a linear connection $D$ is
called a \emph{natural connection} on the manifold
$(M,\ff,\allowbreak\xi,\eta,g)$ if the almost contact structure
$(\ff,\xi,\eta)$ and the B-metric $g$ (consequently also
$\tilde{g}$) are parallel with respect to $D$, \ie
$D\ff=D\xi=D\eta=Dg=D\tilde{g}=0$.
%\end{defn}
%
%\begin{prop}[Ref.~\cite{ManIv36}]
In \cite{ManIv36}, it is proved that a linear connection
$D$ is natural on $(M,\ff,\allowbreak\xi,\eta,g)$ if and only if
$D\ff=Dg$.
%\end{prop}
%
A natural connection exists on any almost contact B-metric manifold
and coincides with the Levi-Civita connection only on a
$\F_0$-manifold.

The torsion tensor $T$ of $D$ is determined by
$T(x,y)=D_xy-D_yx-[x,y]$ and the corresponding tensor of type
(0,3) is defined by the condition $T(x,y,z)=g(T(x,y),z)$.

Let $Q$ be the potential tensor of $D$ with respect to $\n$
determined by
\begin{equation}\label{mi:Q}
D_xy=\n_xy+Q(x,y).
\end{equation}
The corresponding tensor of type (0,3) is defined as follows
$Q(x,y,z)=g(Q(x,y),z)$.

%\begin{prop}[Ref.~\cite{Man31}]\label{mi:prop-nat-con}
According to \cite{Man31}, a linear connection $D$
is a natural connection on an almost contact B-metric manifold if and only if %
\begin{gather*}
 Q(x,y,\ff z)-Q(x,\ff y,z)=F(x,y,z),\label{mi:1a}
 \\%
 Q(x,y,z)=-Q(x,z,y).\label{mi:1b}
\end{gather*}
%\end{prop}

A natural connection $\ddot{D}$ on the manifold $\M$, which
torsion tensor $\ddot{T}$ is totally skew-symmetric (i.e. a
3-form), exists in the basic classes $\F_3$ and $\F_7$, it is
unique and is called the
\emph{$\ff$KT-connection} \cite{Man31}. %
Let us remark that this connection in the Hermitian geometry is
known as the Bismut connection or the KT-connection.

%\begin{defn}[Ref.~\cite{Man-Gri2}]\label{mi:defn-fiB}
In \cite{Man-Gri2}, it is introduced a natural connection
$\dot{D}$ on $\M$ in all basic classes by
\begin{equation}\label{mi:defn-fiB}
\begin{array}{l}
\dot{D}_xy=\n_xy+\frac{1}{2}\bigl\{\left(\n_x\ff\right)\ff
y+\left(\n_x\eta\right)y\cdot\xi\bigr\}-\eta(y)\n_x\xi.
\end{array}
\end{equation}
This connection is called a \emph{$\ff$B-connection} in
\cite{ManIv37} and it is studied for the main classes
$\F_1,\F_4,\F_5,\F_{11}$  in Refs.~\cite{Man-Gri2}, \cite{Man3}
and \cite{Man4}. The $\ff$B-connection is the odd-dimensional
counterpart of the B-connection on the corresponding almost
complex manifold with Norden metric, studied for the class $\W_1$
in \cite{GaGrMi}.

%\begin{defn}[\Cite{ManIv38}]\label{mi:defn-canonical}
In \cite{ManIv38}, a natural connection $\dddot{D}$ is
called a \emph{$\ff$-canonical connection} on
$(M,\ff,\xi,\allowbreak\eta,g)$ if its torsion tensor $\dddot{T}$
satisfies the following identity:
\begin{equation}\label{mi:T-can}
\begin{split}
    &\dddot{T}(x,y,z)-\dddot{T}(x,z,y)-\dddot{T}(x,\ff y,\ff z)
    +\dddot{T}(x,\ff z,\ff y)=\\%[4pt]
    &=\eta(x)\left\{\dddot{T}(\xi,y,z)-\dddot{T}(\xi,z,y)
    -\dddot{T}(\xi,\ff y,\ff z)+\dddot{T}(\xi,\ff z,\ff y)\right\}\\%[4pt]
    &+\eta(y)\left\{\dddot{T}(x,\xi,z)-\dddot{T}(x,z,\xi)
    -\eta(x)\dddot{T}(z,\xi,\xi)\right\}\\%[4pt]
    &-\eta(z)\left\{\dddot{T}
(x,\xi,y)-\dddot{T}(x,y,\xi)-\eta(x)\dddot{T}(y,\xi,\xi)\right\}.
\end{split}
\end{equation}
%\end{defn}

In this work we pay attention on the case when the manifold $\M$
belongs to the class $\F_7$ from the mentioned classification.
This basic class is characterized by the conditions: \cite{Man8}
\begin{equation*}
\begin{split}
\F_{7}:\quad &F(x,y,z)=F(x,y,\xi)\eta(z)+F(x,z,\xi)\eta(y),\\%[4pt]
                &F(x,y,\xi)=-F(y,x,\xi)=-F(\ff x,\ff y,\xi).
\end{split}
\end{equation*}

The class $\F_3\oplus\F_7$ is characterized by the condition
$\widehat{N}=0$. This is the only class of $\M$ where the
$\ff$KT-connection exists \cite{Man31}. The basic classes $\F_3$
and $\F_7$ are the horizontal component and the vertical one of
$\F_3\oplus\F_7$, respectively. We are interesting in $\F_7$
because the contact distribution $\ker(\eta)$ of any
$\F_3$-manifold is an almost complex manifold with Norden metric
belonging to the class $\W_3$ of quasi-K\"ahler manifolds with
Norden metric which are well-studied in relation with their
curvature properties and natural connections (see e.g.
\cite{MekMan05}, \cite{Mek08a}, \cite{Mek08b},
\cite{Mek09}, \cite{Mek10}).
The mentioned topics on $\F_7$-manifolds are studied in
\cite{Man31}, \cite{Man33} and \cite{ManIv37}. In
\cite{ManIv36}, it is proved that if $\M$ is an arbitrary
manifold in $\F_i, \; i\in\{3,7\}$, then $\ff$B-connection
$\dot{D}$ is the average connection of the $\ff$KT-connection
$\ddot{D}$ and the $\ff$-canonical connection $\dddot{D}$, i.e.
$2\dot{D}=\ddot{D}+\dddot{D}$.

\section{A family of Lie groups as 5-dimensional $\F_7$-manifolds}\label{mi:sec3}

Let us consider the example given in \cite{Man31}. Let $G$
be a 5-dimensional real connected Lie group and $\mathfrak{g}$ be
its Lie algebra. Let $\left\{e_i\right\}_{i=1}^{5}$ be a global
basis of left-invariant vector fields of $G$. An almost contact
B-metric structure is introduced by
\begin{gather}
\begin{array}{c}\label{mi:f}
\ff e_1 = e_3,\quad \ff e_2 = e_4,\quad \ff e_3 =-e_1,\quad \ff
e_4 = -e_2,\quad \ff e_5 =0;\\%[4pt]
\xi=e_5;\qquad \eta(e_1)=\eta(e_2)=\eta(e_3)=\eta(e_4)=0,\quad \eta(e_5)=1;
\end{array}
\\
\begin{array}{c}\label{mi:g}
g(e_1,e_1)=g(e_2,e_2)=-g(e_3,e_3)=-g(e_4,e_4)=g(e_5,e_5)=1,
\\%[4pt]
g(e_i,e_j)=0,\; i\neq j,\quad  i,j\in\{1,2,3,4,5\};
\end{array}
\\
\begin{array}{l}\label{mi:komutator}
\left[e_1,e_2\right]=-\left[e_3,e_4\right]=-\lm_1e_1-\lm_2e_2+\lm_3e_3+\lm_4e_4+2\mu_1e_5,\nonumber\\%[4pt]
\left[e_1,e_4\right]=-\left[e_2,e_3\right]=-\lm_3e_1-\lm_4e_2-\lm_1e_3-\lm_2e_4+2\mu_2e_5.\nonumber
\end{array}
\end{gather}
It is proved that $(G,\ff,\xi,\eta,g)$ belongs to the class
$\F_7$. It is not an $\F_0$-manifold if and only if
$(\mu_1,\mu_2)\neq (0,0)$ holds \cite{Man31}.

Actually, $G$ is a family of manifolds determined by six real
parameters $\lm_1,\dots,\lm_4$, $\mu_1$, $\mu_2$.

In \cite{Man31}, there are obtained the components of the
Levi-Civita connection $\n$ and the $\ff$KT-connection $\ddot{D}$
on the manifold $(G,\ff,\xi,\eta,g)$:
\begin{equation}\label{mi:nabli}
\begin{array}{l}
\n_{e_1}e_1=-\n_{e_3}e_3=\ddot{D}_{e_1}e_1=-\ddot{D}_{e_3}e_3=\lm_1e_2-\lm_3e_4, \\%[4pt]%
\n_{e_1}e_2=-\n_{e_3}e_4=-\lm_1e_1+\lm_3e_3+\mu_1e_5, \\%[4pt]
\ddot{D}_{e_1}e_2=-\ddot{D}_{e_3}e_4=-\lm_1e_1+\lm_3e_3, \\%[4pt]%
\n_{e_1}e_3=\n_{e_3}e_1=\ddot{D}_{e_1}e_3=\ddot{D}_{e_3}e_1=\lm_3e_2+\lm_1e_4, \\%[4pt]%
\n_{e_1}e_4=\n_{e_3}e_2=-\lm_3e_1-\lm_1e_3+\mu_2e_5, \\%[4pt]%
\ddot{D}_{e_1}e_4=\ddot{D}_{e_3}e_2=-\lm_3e_1-\lm_1e_3, \\%[4pt]%
\n_{e_2}e_1=-\n_{e_4}e_3=\lm_2e_2-\lm_4e_4-\mu_1e_5, \\%[4pt]%
\ddot{D}_{e_2}e_1=-\ddot{D}_{e_4}e_3=\lm_2e_2-\lm_4e_4,\\%[4pt]%
\n_{e_2}e_2=-\n_{e_4}e_4=\ddot{D}_{e_2}e_2=-\ddot{D}_{e_4}e_4=-\lm_2e_1+\lm_4e_3, \\%[4pt]%
\n_{e_2}e_3=\n_{e_4}e_1=\lm_4e_2+\lm_2e_4-\mu_2e_5,\\%[4pt]%
\ddot{D}_{e_2}e_3=\ddot{D}_{e_4}e_1=\lm_4e_2+\lm_2e_4,\\%[4pt]%
\n_{e_2}e_4=\n_{e_4}e_2=\ddot{D}_{e_2}e_4=\ddot{D}_{e_4}e_2=-\lm_4e_1-\lm_2e_3,\\%[4pt]%
\n_{e_1}e_5 =\n_{e_5}e_1 =\frac12\ddot{D}_{e_5}e_1=-\mu_1e_2+\mu_2e_4,\\%[4pt]%
\n_{e_2}e_5 =\n_{e_5}e_2 =\frac12\ddot{D}_{e_5}e_2=\mu_1e_1-\mu_2e_3,\\%[4pt]%
\n_{e_3}e_5=\n_{e_5 }e_3=\frac12\ddot{D}_{e_5}e_3=-\mu_2e_2-\mu_1e_4,\\%[4pt]%
\n_{e_4}e_5=\n_{e_5 }e_4=\frac12\ddot{D}_{e_5}e_4=\mu_2e_1+\mu_1e_3, \\%[4pt]%
\n_{e_5}e_5 =\ddot{D}_{e_1}e_5=\ddot{D}_{e_2}e_5=
\ddot{D}_{e_3}e_5=\ddot{D}_{e_4}e_5=\ddot{D}_{e_5}e_5=0.%
\end{array}
\end{equation}

The $\ff$KT-connection $\ddot{D}$ in $\F_7$ is defined
by: \cite{Man31}
\[
\ddot{T}(x,y)=2\{\eta(x)\n_y\xi-\eta(y)\n_x\xi+(\n_x\eta)y\cdot\xi\}.
\]
It is obtained that the basic non-zero components of the torsion
of $\ddot{D}$ are:
\begin{equation*}\label{mi:T-eijk}
\begin{array}{l}
\ddot{T}_{125}=-\ddot{T}_{215}=-\ddot{T}_{345}=\ddot{T}_{435}=-2\mu_1,\\
\ddot{T}_{145}=-\ddot{T}_{235}=\ddot{T}_{325}=-\ddot{T}_{415}=-2\mu_2.
\end{array}
\end{equation*}

For the square norm
$\norm{\ddot{T}}=g^{ip}g^{jq}g^{ks}\ddot{T}_{ijk}\ddot{T}_{pqs}$
we obtain
\begin{equation}\label{mi:norm dot{T}}
\norm{\ddot{T}}=16\left(\mu^2_1-\mu^2_2\right).
\end{equation}

For  $\n$ (respectively, $\ddot{D}$) there are computed the basic
components $R_{ijkl}=R(e_i,e_j,e_k,e_l)$ of the curvature tensor
$R$ (respectively, $\ddot{R}$), $\rho_{jk}=\rho(e_j,e_k)$ of the
Ricci tensor $\rho$ (respectively, $\ddot{\rho}$) and the values
of the scalar curvature $\tau$ (respectively, $\ddot{\tau}$) as
follows (the remaining ones are obtained, according to
\eqref{mi:L-curv-like-1} and \eqref{mi:L-curv-like-2}):
\begin{gather}
\begin{split}\label{mi:R-F7}
R_{1212}&=R_{3434}=\left(\lm^2_1+\lm^2_2-\lm^2_3-\lm^2_4\right)+3\mu^2_1,\\%[4pt]
R_{1234}&=-\left(\lm^2_1+\lm^2_2-\lm^2_3-\lm^2_4\right)-2\mu^2_1+\mu^2_2,\\%[4pt]
R_{1414}&=R_{2323}=-\left(\lm^2_1+\lm^2_2-\lm^2_3-\lm^2_4\right)+3\mu^2_2,\\%[4pt]
R_{1423}&=\left(\lm^2_1+\lm^2_2-\lm^2_3-\lm^2_4\right)+\mu^2_1-2\mu^2_2,\\%[4pt]
R_{1214}&=-R_{1223}=R_{2334}=-R_{1434}=2\left(\lm_1\lm_3+\lm_2\lm_4\right)+3\mu_1\mu_2,\\%[4pt]
R_{1324}&=-\left(\mu^2_1+\mu^2_2\right), \qquad
R_{1535}=R_{2545}=-2\mu_1\mu_2,\\%[4pt]
R_{1515}&=R_{2525}=-R_{3535}=-R_{4545}=-\mu^2_1+\mu^2_2;
\end{split}
\\
\begin{array}{l}\label{mi:ro}
\rho_{11}=\rho_{22}=-\rho_{33}=-\rho_{44}=-2\left(\lm^2_1+\lm^2_2-\lm^2_3-\lm^2_4\right)
-2\left(\mu^2_1-\mu^2_2\right),\\%[4pt]
\rho_{13}=\rho_{24}=-4\left(\lm_1\lm_3+\lm_2\lm_4\right)-4\mu_1\mu_2, \qquad
\rho_{55}=4\left(\mu^2_1-\mu^2_2\right);\\%[4pt]
\end{array}
\\
\begin{array}{l}\label{mi:tao}
\tau=-8\left(\lm^2_1+\lm^2_2-\lm^2_3-\lm^2_4\right)-4\left(\mu^2_1-\mu^2_2\right);\\%[4pt]
\end{array}
\\
\begin{split}\label{mi:K-fB-F7}
\ddot{R}_{1212}&=-\ddot{R}_{1234}=\ddot{R}_{3434}=\left(\lm_1^2+\lm_2^2-\lm_3^2-\lm_4^2\right)+4\mu_1^2,\\%[4pt]
\ddot{R}_{1414}&=-\ddot{R}_{1423}=\ddot{R}_{2323}=-\left(\lm_1^2+\lm_2^2-\lm_3^2-\lm_4^2\right)+4\mu_2^2,\\%[4pt]
\ddot{R}_{1214}&=-\ddot{R}_{1223}=\ddot{R}_{2334}
=-\ddot{R}_{1434}=2\left(\lm_1\lm_3+\lm_2\lm_4\right)+4\mu_1\mu_2;
\end{split}
\\
\begin{array}{l}\label{mi:rofKT-F7}
\ddot{\rho}_{11}=\ddot{\rho}_{22}=-\ddot{\rho}_{33}=-\ddot{\rho}_{44}
=-2\left(\lm^2_1+\lm^2_2-\lm^2_3-\lm^2_4\right)-4\left(\mu^2_1-\mu^2_2\right),\\%[4pt]
\ddot{\rho}_{13}=\ddot{\rho}_{24}=-4\left(\lm_1\lm_3+\lm_2\lm_4\right)-8\mu_1\mu_2;
\end{array}
\\
\begin{array}{l}\label{mi:taofKT-F7}
\ddot{\tau}=-8\left(\lm^2_1+\lm^2_2-\lm^2_3-\lm^2_4\right)-16\left(\mu^2_1-\mu^2_2\right).
\end{array}
\end{gather}

In \cite{Man31}, it is established
\begin{prop}\label{mm:prop:Man31}
The following conditions are equivalent:
\begin{enumerate}
\item The manifold $(G,\ff,\xi,\eta,g)$ is an
isotropic-$\F_0$-manifold; \item The scalar curvatures for $\n$
and $\ddot{D}$ are equal; \item The vectors $\n_{e_i}\xi \;
(i=1,2,3,4)$ are isotropic; \item The equality $\mu_1=\pm\mu_2$ is
valid.
\end{enumerate}
\end{prop}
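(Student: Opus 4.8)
The strategy is to show that each of the four listed conditions is equivalent to the single algebraic relation $\mu_1^2=\mu_2^2$, which is condition $(4)$; once this is done the whole cycle of equivalences follows at once. Two of the three remaining equivalences are then read off directly from data already in hand. For $(2)\Leftrightarrow(4)$, subtracting \eqref{mi:taofKT-F7} from \eqref{mi:tao} gives $\tau-\ddot{\tau}=12\left(\mu_1^2-\mu_2^2\right)$, so the two scalar curvatures coincide exactly when $\mu_1^2=\mu_2^2$. For $(3)\Leftrightarrow(4)$, I would take the vectors $\n_{e_i}\xi=\n_{e_i}e_5$, $i\in\{1,2,3,4\}$, from \eqref{mi:nabli} and evaluate their square norms with respect to \eqref{mi:g}: a short check yields $g\left(\n_{e_i}\xi,\n_{e_i}\xi\right)=\pm\left(\mu_1^2-\mu_2^2\right)$ for each such $i$, so all four of these vectors are isotropic precisely when $\mu_1^2=\mu_2^2$.

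The equivalence $(1)\Leftrightarrow(4)$ is the one calling for a genuine computation, since $\norm{\n\ff}$ is not among the quantities already recorded. I would compute the components $\left(\n_{e_i}\ff\right)e_j=\n_{e_i}(\ff e_j)-\ff\left(\n_{e_i}e_j\right)$ from \eqref{mi:f} and \eqref{mi:nabli}. As is forced by membership in $\F_7$, every term built solely from $\lm_1,\dots,\lm_4$ cancels; the surviving nonzero components are the $\left(\n_{e_i}\ff\right)e_j$ with $i,j\in\{1,2,3,4\}$ (each a multiple of $e_5$) together with the $\left(\n_{e_i}\ff\right)e_5$ with $i\in\{1,2,3,4\}$, all with coefficients linear in $\mu_1$ and $\mu_2$. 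Forming $\norm{\n\ff}=g^{ij}g^{ks}g\bigl(\left(\n_{e_i}\ff\right)e_k,\left(\n_{e_j}\ff\right)e_s\bigr)$ and using that $g$ is diagonal in this basis by \eqref{mi:g}, one is left with $\norm{\n\ff}=-8\left(\mu_1^2-\mu_2^2\right)$; alternatively one may shorten the bookkeeping by first deriving the $\F_7$-identity $F(x,y,\xi)=-g\left(\n_x\xi,\ff y\right)$ and expressing $\norm{\n\ff}$ through $\n\xi$ alone. Either way, $\norm{\n\ff}=0$ holds if and only if $\mu_1^2=\mu_2^2$, that is, $(G,\ff,\xi,\eta,g)$ is an isotropic-$\F_0$-manifold exactly when $\mu_1=\pm\mu_2$.

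The hardest part will be purely the bookkeeping in this last step: although each component $\left(\n_{e_i}\ff\right)e_j$ is elementary, there are many of them, and the indefinite signature $(3,2)$ of $g$ introduces signs both in the inverse metric $g^{ij}$ and in the inner products, so the clean cancellation down to a multiple of $\mu_1^2-\mu_2^2$ must be tracked with care. A convenient consistency check is \eqref{mi:norm dot{T}}: since $\n\ff$, the torsion $\ddot{T}$ and $\n\xi$ are all manufactured from the same pair $\left(\mu_1,\mu_2\right)$, the relation $\norm{\ddot{T}}=-2\norm{\n\ff}=16\left(\mu_1^2-\mu_2^2\right)$ confirms both the sign and the constant.
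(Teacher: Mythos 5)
Your argument is correct. Note that the paper itself gives no proof of this proposition --- it is imported verbatim from \cite{Man31} --- so there is nothing in the text to compare against; your reconstruction is the natural direct computation one would expect that reference to contain. I checked the three pivots: $\tau-\ddot{\tau}=12(\mu_1^2-\mu_2^2)$ follows from \eqref{mi:tao} and \eqref{mi:taofKT-F7}; the squared lengths of $\n_{e_i}\xi$ read off from \eqref{mi:nabli} and \eqref{mi:g} are indeed $\pm(\mu_1^2-\mu_2^2)$; and computing $(\n_{e_i}\ff)e_j$ from \eqref{mi:f} and \eqref{mi:nabli} gives exactly the pattern you describe (all $\lm$-terms cancel, each block of fixed $i\in\{1,2,3,4\}$ contributes $2(\mu_2^2-\mu_1^2)$, the $i=5$ block contributes nothing), yielding $\norm{\n\ff}=-8(\mu_1^2-\mu_2^2)$. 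The shortcut via $F(x,y,\xi)=-g(\n_x\xi,\ff y)$ is also valid in $\F_7$ and would let you deduce $(1)\Leftrightarrow(3)$ with almost no bookkeeping, which is arguably the cleanest route through the cycle.
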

Moreover, the manifold $(G,\ff,\xi,\eta,g)$ is Einsteinian if and
only if the following conditions are valid: \cite{Man31}
\[
\mu_1\mu_2=-\left(\lm_1\lm_3+\lm_2\lm_4\right), \quad
\mu^2_1-\mu^2_2=-\frac{1}{3}\left(\lm^2_1+\lm^2_2-\lm^2_3-\lm^2_4\right).
\]

\section{A family of Lie groups as 5-dimensional $\F_7$-manifolds --- Further investigations}\label{mi:sec4}

In this section we continue the studying of the manifold $(G,\ff,\xi,\eta,g)$.

In \cite{ManDiss}, there is determined the form of the
Nijenhuis tensor $N$ on  $\M\in\F_7$ as follows:
\begin{equation}\label{mi:N-F7}
N(x,y)=4\left(\n_{x}\eta\right)y
\cdot \xi. %in the every one of the basic classes
%$\F_i \; (i=1,2,\dots,11)$.
\end{equation}

According to \eqref{mi:N-F7}, \eqref{mi:f} and \eqref{mi:nabli}, we
obtain
the following non-zero components $N_{ij}=N(e_i,e_j)$ of $N$:% of $\n$:
\begin{equation*}\label{mi:Nij}
\begin{array}{l}
N_{12}=-N_{21}=-N_{34}=N_{43}=-4\mu_1\xi,\\%[4pt]
N_{14}=-N_{23}=N_{32}=-N_{41}=-4\mu_2\xi.
\end{array}
\end{equation*}

For the square norm $\nN=g^{ik}g^{js}g(N_{ij},N_{ks})$ we obtain
\begin{equation}\label{mi:norm N}
\nN=64\left(\mu^2_1-\mu^2_2\right).
\end{equation}

Using \eqref{mi:f}, \eqref{mi:g}, \eqref{mi:R-F7} and
\eqref{mi:K-fB-F7}, we compute the components of the associated
Ricci tensors for $\n$ and
$\ddot{D}$, respectively. The non-zero components of $\rho^{*}$ and
associated scalar curvatures are the following:
\begin{gather}
\begin{split}\label{mi:ro*-F7}%{l}
\rho^{*}_{11}&=\rho^{*}_{22}=-\rho^{*}_{33}=-\rho^{*}_{44}
=-4\left(\lm_1\lm_3+\lm_2\lm_4\right)-6\mu_1\mu_2,\\
\rho^{*}_{55}&=4\mu_1\mu_2,\\%[4pt]
\rho^{*}_{13}&=\rho^{*}_{24}=\rho^{*}_{31}=\rho^{*}_{42}
=2\left(\lm^2_1+\lm^2_2-\lm^2_3-\lm^2_4\right)+3\left(\mu^2_1-\mu^2_2\right);\\%[4pt]
\end{split}
\\
\label{mi:tao*-F7}
\tau^{*}=-16\left(\lm_1\lm_3+\lm_2\lm_4\right)-20\mu_1\mu_2;
\\
\begin{split}\label{mi:ro*-fiKT-F7}
\ddot{\rho}^{*}_{11}&=\ddot{\rho}^{*}_{22}=-\ddot{\rho}^{*}_{33}=-\ddot{\rho}^{*}_{44}
=-4\left(\lm_1\lm_3+\lm_2\lm_4\right)-8\mu_1\mu_2,\\%[4pt]
\ddot{\rho}^{*}_{13}&=\ddot{\rho}^{*}_{24}=\ddot{\rho}^{*}_{31}=\ddot{\rho}^{*}_{42}
=2\left(\lm^2_1+\lm^2_2-\lm^2_3-\lm^2_4\right)+4\left(\mu^2_1-\mu^2_2\right);\\%[4pt]
\end{split}
\\
\label{mi:tao*-fiKT-F7}
\ddot{\tau}^{*}=-16\left(\lm_1\lm_3+\lm_2\lm_4\right)-32\mu_1\mu_2.
\end{gather}

Bearing in mind \eqref{mi:sec curv}, let $k_{ij}$ ($i\neq j$)
be the sectional curvature for  $\n$ (or, with respect to $R$)  of the
basic 2-plane $\al_{ij}$ with a basis $\{e_i,e_j\}$, where  $e_i, e_j\in\{e_1,\dots,
e_5\}$. The basic 2-planes
$\al_{ij}$ of $(G,\ff,\xi,\eta,g)$ are the following:
\begin{itemize}
\item totally real sections ---
$\al_{12}$, $\al_{14}$, $\al_{23}$, $\al_{34}$; %
\item $\ff$-holomorphic sections --- $\al_{13}$, $\al_{24}$; %
\item $\xi$-sections ---
$\al_{51}$, $\al_{52}$, $\al_{53}$, $\al_{54}$.
\end{itemize}

Then, using \eqref{mi:sec curv}, \eqref{mi:g} and \eqref{mi:R-F7}, we
compute the sectional curvatures $k_{ij}$ for $\n$ and obtain:
\begin{equation}\label{mi:sec.curv-R}
\begin{array}{c}
k_{12}=k_{34}=-\left(\lm^2_1+\lm^2_2-\lm^2_3-\lm^2_4\right)-3\mu^2_1, \\%[4pt]
k_{14}=k_{23}=-\left(\lm^2_1+\lm^2_2-\lm^2_3-\lm^2_4\right)+3\mu^2_2;\\%[4pt]
k_{13}=k_{24}=0;\\%[4pt]
k_{51}=k_{52}=k_{53}=k_{54}=\mu^2_1-\mu^2_2.
\end{array}
\end{equation}

Analogously, from  \eqref{mi:sec curv}, \eqref{mi:g} and
\eqref{mi:K-fB-F7} we
calculate the sectional curvatures $\ddot{k}_{ij}$ for $\ff$KT-connection $\ddot{D}$ and obtain:
\begin{equation}\label{mi:sec.curv-R-fiKT}
\begin{array}{c}
\ddot{k}_{12}=\ddot{k}_{34}=-\left(\lm^2_1+\lm^2_2-\lm^2_3-\lm^2_4\right)-4\mu^2_1,\\%[4pt]
\ddot{k}_{14}=\ddot{k}_{23}=-\left(\lm^2_1+\lm^2_2-\lm^2_3-\lm^2_4\right)+4\mu^2_2;\\%[4pt]
\ddot{k}_{13}=\ddot{k}_{24}=\ddot{k}_{51}=\ddot{k}_{52}=\ddot{k}_{53}=\ddot{k}_{54}=0.
\end{array}
\end{equation}

Let us consider the $\ff$B-connection $\dot{D}$ on
$(G,\ff,\xi,\eta,g)$ defined by \eqref{mi:Q}
 and
\eqref{mi:defn-fiB}. Then, by \eqref{mi:f} and \eqref{mi:nabli} we
compute its components as follows:
%\begin{subequations}
\begin{equation}\label{mi:fB-F7}
\begin{array}{ll}
\dot{D}_{e_1}e_1=-\dot{D}_{e_3}e_3=\lm_1e_2-\lm_3e_4,\quad
&\dot{D}_{e_5}e_1=-\mu_1e_2+\mu_2e_4, \\%[4pt]
\dot{D}_{e_1}e_2=-\dot{D}_{e_3}e_{4}=-\lm_1e_1+\lm_3e_3, \quad
&\dot{D}_{e_5}e_2=\mu_1e_1-\mu_2e_3,
\\%[4pt]
\dot{D}_{e_1}e_3=\dot{D}_{e_3}e_{1}=\lm_3e_2+\lm_1e_4, \quad
&\dot{D}_{e_5}e_3=-\mu_2e_2-\mu_1e_4,
\\%[4pt]
\dot{D}_{e_1}e_4=\dot{D}_{e_3}e_{2}=-\lm_3e_1-\lm_1e_3, \quad
&\dot{D}_{e_5}e_4=\mu_2e_1+\mu_1e_3,
\\%[4pt]
\dot{D}_{e_2}e_1=-\dot{D}_{e_4}e_{3}=\lm_2e_2-\lm_4e_4, \quad
&\dot{D}_{e_i}e_5=0, \; i\in\{1,2,3,4,5\}.
\\%[4pt]
\dot{D}_{e_2}e_2=-\dot{D}_{e_4}e_{4}=-\lm_2e_1+\lm_4e_3,&\\%[4pt]
\dot{D}_{e_2}e_3=\dot{D}_{e_4}e_{1}=\lm_4e_2+\lm_2e_4,&\\%[4pt]
\dot{D}_{e_2}e_4=\dot{D}_{e_4}e_{2}=-\lm_4e_1-\lm_2e_3,&
\end{array}
\end{equation}
%\end{subequations}
Thus, we get that the basic non-zero components of the torsion of
$\dot{D}$ are:
\begin{equation*}\label{mi:TfB-F7}
\begin{split}
\dot{T}_{125}&=-\dot{T}_{215}=2\dot{T}_{251}
=-\dot{T}_{345}\\
&=2\dot{T}_{354}
=\dot{T}_{435}=-2\dot{T}_{521}=-2\dot{T}_{534}
=-2\mu_1,\\%[4pt]
\dot{T}_{145}&=-\dot{T}_{235}=2\dot{T}_{253}=\dot{T}_{325}\\
&=-\dot{T}_{415}=2\dot{T}_{451}=-2\dot{T}_{523}=-2\dot{T}_{541}
=-2\mu_2.
\end{split}
\end{equation*}

For the square norm
$\norm{\dot{T}}=g^{ip}g^{jq}g^{ks}\dot{T}_{ijk}\dot{T}_{pqs}$ we
obtain
\begin{equation}\label{mi:norm ddot{T}}
\norm{\dot{T}}=20\left(\mu^2_1-\mu^2_2\right).
\end{equation}

According to \eqref{mi:fB-F7}, we obtain the following components
$\dot{R}_{ijkl}=\dot{R}(e_i,e_j,e_k,e_j)$ of the curvature tensor
$\dot{R}$ of $\dot{D}$ on the manifold (the remaining ones are obtained, according to
\eqref{mi:L-curv-like-1} and \eqref{mi:L-curv-like-2}):
\begin{equation}\label{mi:RfB-F7}
\begin{split}
\dot{R}_{1212}&=-\dot{R}_{1234}=-\dot{R}_{3412}=\dot{R}_{3434}\\%[4pt]
&=\left(\lm^2_1+\lm^2_2-\lm^2_3-\lm^2_4\right)+2\mu^2_1,\\%[4pt]
\dot{R}_{1414}&=-\dot{R}_{1423}=-\dot{R}_{2314}=\dot{R}_{2323}\\%[4pt]
&=-\left(\lm^2_1+\lm^2_2-\lm^2_3-\lm^2_4\right)+2\mu^2_2,\\%[4pt]
\dot{R}_{1214}&=\dot{R}_{1412}=-\dot{R}_{1223}=-\dot{R}_{1434}=-\dot{R}_{2312}=\dot{R}_{2334}\\%[4pt]
&=-\dot{R}_{3414}=\dot{R}_{3423}=2\left(\lm_1\lm_3+\lm_2\lm_4\right)+2\mu_1\mu_2.
\end{split}
\end{equation}

Using \eqref{mi:f}, \eqref{mi:g} and \eqref{mi:RfB-F7}, we compute
the components of the Ricci tensor $\dot{\rho}$, the value of the
scalar curvature $\dot{\tau}$ and their associated
quantities for the $\ff$B-connection $\dot{D}$. The non-zero components of these tensors
and the scalar curvatures are the following:
\begin{gather}
\begin{split}\label{mi:rofB-F7}
\dot{\rho}_{11}&=\dot{\rho}_{22}=-\dot{\rho}_{33}=-\dot{\rho}_{44}
=-\dot{\rho}^{*}_{13}=-\dot{\rho}^{*}_{24}=-\dot{\rho}^{*}_{31}=-\dot{\rho}^{*}_{42}\\%[4pt]
&=-2\left(\lm^2_1+\lm^2_2-\lm^2_3-\lm^2_4\right)-2\left(\mu^2_1-\mu^2_2\right),\\%[4pt]
\dot{\rho}_{13}&=\dot{\rho}_{24}=\dot{\rho}_{31}=\dot{\rho}_{42}
=\dot{\rho}^{*}_{11}=\dot{\rho}^{*}_{22}=-\dot{\rho}^{*}_{33}=-\dot{\rho}^{*}_{44}\\%[4pt]
&=-4\left(\lm_1\lm_3+\lm_2\lm_4\right)-4\mu_1\mu_2;
\end{split}
\\
\dot{\tau}=-8\left(\lm^2_1+\lm^2_2-\lm^2_3-\lm^2_4\right)
-8\left(\mu^2_1-\mu^2_2\right),\label{mi:taofB-F7}\\%[4pt]
\dot{\tau}^{*}=-16\left(\lm_1\lm_3+\lm_2\lm_4\right)-16\mu_1\mu_2.\label{mi:tao*fB-F7}
\end{gather}

Taking into account \eqref{mi:sec curv}, \eqref{mi:g} and
\eqref{mi:RfB-F7}, we obtain the following basic sectional curvatures $\dot{k}_{ij}$ for the $\ff$B-connection:
\begin{equation}\label{mi:sec.curv-R-fiB}
\begin{array}{l}
\dot{k}_{12}=\dot{k}_{34}=-\left(\lm^2_1+\lm^2_2-\lm^2_3-\lm^2_4\right)-2\mu^2_1,\\%[4pt]
\dot{k}_{14}=\dot{k}_{23}=-\left(\lm^2_1+\lm^2_2-\lm^2_3-\lm^2_4\right)+2\mu^2_2;\\%[4pt]
\dot{k}_{13}=\dot{k}_{24}=\dot{k}_{51}=\dot{k}_{52}=\dot{k}_{53}=\dot{k}_{54}=0.
\end{array}
\end{equation}

%\bigskip

Let us consider the $\ff$-canonical connection $\dddot{D}$ on
$(G,\ff,\xi,\eta,g)$ defined by \eqref{mi:T-can}. Then, by
\eqref{mi:f} and \eqref{mi:nabli} we compute its components as
follows:
\begin{equation}\label{mi:fcanon-F7}
\begin{split}%{l}
&\dddot{D}_{e_i}e_j=\dot{D}_{e_i}e_j, \quad i,j\in\{1,2,3,4\},\\%[4pt]
%\dddot{D}_{e_1}e_1&=-\dddot{D}_{e_3}e_3=\lm_1e_2-\lm_3e_4,\\%[4pt]
%\dddot{D}_{e_1}e_2=-\dddot{D}_{e_3}e_{4}=-\lm_1e_1+\lm_3e_3,\\%[4pt]
%\dddot{D}_{e_1}e_3=\dddot{D}_{e_3}e_{1}=\lm_3e_2+\lm_1e_4,\\%[4pt]
%\dddot{D}_{e_1}e_4=\dddot{D}_{e_3}e_{2}=-\lm_3e_1-\lm_1e_3,\\%[4pt]
%\dddot{D}_{e_2}e_1=-\dddot{D}_{e_4}e_{3}=\lm_2e_2-\lm_4e_4,\\%[4pt]
%\dddot{D}_{e_2}e_2=-\dddot{D}_{e_4}e_{4}=-\lm_2e_1+\lm_4e_3,\\%[4pt]
%\dddot{D}_{e_2}e_3=\dddot{D}_{e_4}e_{1}=\lm_4e_2+\lm_2e_4,\\%[4pt]
%\dddot{D}_{e_2}e_4=\dddot{D}_{e_4}e_2=-\lm_4e_1-\lm_2e_3,\\%[4pt]
&\dddot{D}_{e_i}e_5=\dddot{D}_{e_5}e_i=0, \quad i\in\{1,2,3,4,5\}.
\end{split}
\end{equation}

The basic non-zero components of the torsion of $\dddot{D}$ are:
\begin{equation*}\label{mi:Tfcanon-F7}
\begin{split}
\dddot{T}_{125}&=-\dddot{T}_{215}=\dddot{T}_{251}=-\dddot{T}_{345}\\
&=\dddot{T}_{354}
=\dddot{T}_{435}=-\dddot{T}_{521}=-\dddot{T}_{534}=-2\mu_1,\\[4pt]
\dddot{T}_{145}&=-\dddot{T}_{235}=\dddot{T}_{253}=\dddot{T}_{325}\\
&=-\dddot{T}_{415}=\dddot{T}_{451}=-\dddot{T}_{523}=-\dddot{T}_{541}=-2\mu_2.
\end{split}
\end{equation*}

For the square norm $\norm{\dddot{T}}=g^{ip}g^{jq}g^{ks}\dddot{T}_{ijk}\dddot{T}_{pqs}$ we obtain
\begin{equation}\label{mi:norm dddot{T}}
\norm{\dddot{T}}=32\left(\mu^2_1-\mu^2_2\right).
\end{equation}

Using \eqref{mi:fcanon-F7} and \eqref{mi:RfB-F7}, we obtain that the
components $\dot{R}_{ijkl}$ of the curvature tensor $\dot{R}$ of
$\dot{D}$ and the components $\dddot{R}_{ijkl}$ of the curvature
tensor $\dddot{R}$ of  $\dddot{D}$ are equal on
$(G,\ff,\xi,\eta,g)$, \ie
\begin{equation}\label{Rd-ddd}
\dot{R}_{ijkl}=\dddot{R}_{ijkl},\quad
i,j,k,l\in\{1,\dots,5\}.
\end{equation}
Thus, it implies the following
\begin{prop}\label{prop:RR}
The curvature tensors of the $\ff$B-connection and the $\ff$-canon\-ical
connection are equal on $(G,\ff,\xi,\eta,g)$.
\end{prop}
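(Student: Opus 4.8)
The plan is to read Proposition~\ref{prop:RR} as the frame-free restatement of the componentwise identity \eqref{Rd-ddd}. Since $\{e_i\}_{i=1}^{5}$ is a global left-invariant frame on $G$, any $(0,4)$-tensor field on $G$ is completely determined by its components in this frame; hence the equalities $\dot{R}_{ijkl}=\dddot{R}_{ijkl}$ for all $i,j,k,l\in\{1,\dots,5\}$ recorded in \eqref{Rd-ddd} are equivalent to $\dot{R}=\dddot{R}$, which is exactly the assertion of the proposition. So the whole content lies in \eqref{Rd-ddd}, and the proof of the proposition itself is then a single line.

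To establish \eqref{Rd-ddd} I would proceed as follows. First observe, by comparing \eqref{mi:fcanon-F7} with \eqref{mi:fB-F7}, that the $\ff$-canonical connection $\dddot{D}$ and the $\ff$B-connection $\dot{D}$ coincide on every pair of basis vectors except $(e_5,e_i)$, $i\in\{1,2,3,4\}$, where $\dddot{D}_{e_5}e_i=0$ while $\dot{D}_{e_5}e_i$ is the (generally non-zero) vector listed in \eqref{mi:fB-F7}. Then compute the components of $\dddot{R}$ directly from
\[
\dddot{R}(e_i,e_j)e_k=\dddot{D}_{e_i}\dddot{D}_{e_j}e_k-\dddot{D}_{e_j}\dddot{D}_{e_i}e_k-\dddot{D}_{[e_i,e_j]}e_k ,
\]
inserting \eqref{mi:fcanon-F7}, the brackets among $e_1,\dots,e_4$ from \eqref{mi:komutator} (together with $[e_1,e_3]=[e_2,e_4]=0$ and $[e_i,e_5]=0$ for $i\le 4$, all obtained from \eqref{mi:nabli} via $[x,y]=\n_xy-\n_yx$), and lowering the last index with $g$ from \eqref{mi:g}. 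Matching the resulting array against \eqref{mi:RfB-F7} yields \eqref{Rd-ddd}.

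The main obstacle is the bookkeeping around the single place where the two connections disagree. Since $\dddot{D}$ and $\dot{D}$ agree on every pair of basis vectors other than $(e_5,e_i)$ with $i\in\{1,2,3,4\}$, the discrepancy can enter $\dddot{R}(e_i,e_j)e_k$ only (i) through the term $\dddot{D}_{[e_i,e_j]}e_k$, and then only via the $e_5$-component of $[e_i,e_j]$, i.e.\ only for the brackets $[e_1,e_2]$, $[e_3,e_4]$, $[e_1,e_4]$, $[e_2,e_3]$, and (ii) through an iterated derivative $\dddot{D}_{e_a}\bigl(\dddot{D}_{e_5}e_k\bigr)$, which appears only when one of $i,j$ equals $5$. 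Tracking these contributions carefully and verifying that each $\dddot{R}_{ijkl}$ reproduces exactly the corresponding entry of \eqref{mi:RfB-F7} is the one non-routine step; the remainder of the computation simply copies the one already carried out for $\dot{R}$. Once \eqref{Rd-ddd} is in hand, Proposition~\ref{prop:RR} follows at once.
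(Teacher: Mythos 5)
Your reduction of the proposition to the componentwise identity \eqref{Rd-ddd} is exactly what the paper does, and your inventory of where $\dot{D}$ and $\dddot{D}$ differ (only in $\dddot{D}_{e_5}e_i=0$ versus $\dot{D}_{e_5}e_i\neq 0$ for $i\in\{1,2,3,4\}$) together with the two places this difference can enter the curvature is correct and is the right way to organize the computation. The gap is that the decisive step --- ``matching the resulting array against \eqref{mi:RfB-F7} yields \eqref{Rd-ddd}'' --- is asserted rather than performed, and it is precisely the step that does not close with the data as printed. Your case (ii) is harmless: since $\dddot{D}_{e_5}=0$ and $\dddot{D}_{e_i}e_5=\dot{D}_{e_i}e_5=0$, every curvature component carrying an index $5$ vanishes for both connections. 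But in your case (i) the two connections contribute genuinely differently: the iterated-derivative terms coincide (both connections agree on all pairs $(e_a,e_b)$ with $a,b\le 4$ and these derivatives stay inside $\Span(e_1,\dots,e_4)$), so
\[
\dot{R}(e_1,e_2)e_1-\dddot{R}(e_1,e_2)e_1
=-\bigl(\dot{D}_{[e_1,e_2]}e_1-\dddot{D}_{[e_1,e_2]}e_1\bigr)
=-2\mu_1\,\dot{D}_{e_5}e_1
=2\mu_1^2e_2-2\mu_1\mu_2e_4 ,
\]
whence $\dddot{R}_{1212}=\dot{R}_{1212}-2\mu_1^2$ and $\dddot{R}_{1214}=\dot{R}_{1214}-2\mu_1\mu_2$; the analogous computation with $[e_1,e_4]$ gives $\dddot{R}_{1414}=\dot{R}_{1414}-2\mu_2^2$.

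These discrepancies vanish only when $\mu_1=\mu_2=0$, i.e. only in the $\F_0$ case, so there is no compensating contribution for you to find: the verification you defer actually contradicts \eqref{Rd-ddd} if one takes \eqref{mi:fcanon-F7} at face value (note also that \eqref{mi:fcanon-F7} is incompatible with the list of torsion components $\dddot{T}_{ijk}$ and with \eqref{mi:norm dddot{T}}, since it forces $\dddot{T}(e_i,e_5)=0$). To make your argument complete you must either carry out the componentwise check explicitly --- in which case you will be led to re-derive $\dddot{D}$ from the defining condition \eqref{mi:T-can} and decide which of \eqref{mi:fcanon-F7} and \eqref{Rd-ddd} is to be corrected --- or find an a priori reason why the $e_5$-component $2\mu_1$ (resp. $2\mu_2$) of the brackets cannot affect the curvature, which the displayed computation shows is not the case. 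As written, the entire content of the proposition sits in the one line you leave unchecked.
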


%Finally, we summarize the results of \propref{mm:prop:Man31} and
%the interpretations of the characteristics in
%\eqref{mi:sec.curv-R}, \eqref{mi:sec.curv-R-fiKT},
%\eqref{mi:sec.curv-R-fiB}, \eqref{mi:norm N}, \eqref{mi:norm dot{T}},
%\eqref{mi:norm ddot{T}}, \eqref{mi:norm dddot{T}},
%\eqref{mi:tao,taofKT-F7} and \eqref{mi:taofB-F7} and
%\propref{prop:RR} as follows
\begin{prop}
The following characteristics are valid for $(G,\ff,\xi,\eta,g)$:
\begin{enumerate}
\item The $\ff$-holomorphic sectional curvatures for $\n$,
$\dot{D}$, $\ddot{D}$ and $\dddot{D}$ are zero;
\item The $\xi$-sectional curvatures for
$\dot{D}$, $\ddot{D}$ and $\dddot{D}$ are zero;
\item The associated Ricci tensor of $\n$ is proportional to the metric $g$
if and only if the following identities hold:
\[
\begin{array}{l}
\lm^2_1+\lm^2_2-\lm^2_3-\lm^2_4=\mu^2_1-\mu^2_2
=\mu_1\mu_2+\frac{2}{5}\left(\lm_1\lm_3+\lm_2\lm_4\right)=0;
\end{array}
\]
\item %4
The manifold is scalar flat with respect to  $\n$,
$\dot{D}$, $\ddot{D}$ and $\dddot{D}$ if and only if the identities $\lm^2_1+\lm^2_2-\lm^2_3-\lm^2_4=\mu^2_1-\mu^2_2=0$ hold;
\item %5
The manifold has vanishing associated scalar curvature
with respect to $\n$ (respectively, $\dot{D}$, $\ddot{D}$ and $\dddot{D}$)
if and only if
the identity
\[
\mu_1\mu_2=\nu\left(\lm_1\lm_3+\lm_2\lm_4\right)
\]
is valid for $\nu=-\frac{4}{5}$ (respectively, $\nu=-1$, $\nu=-\frac{1}{2}$ and $\nu=-1$);
\item %6
The manifold has natural connections
$\dot{D}$, $\ddot{D}$ and $\dddot{D}$
coinciding with the Levi-Civita connection $\n$
if and only if $(G,\ff,\xi,\eta,g)$ belongs to $\F_0$;
\item %7
The manifold is flat with respect to $\n$
(respectively, $\dot{D}$, $\ddot{D}$ and $\dddot{D}$)
if and only if it belongs to $\F_0$
    and the conditions $\lm^2_1+\lm^2_2-\lm^2_3-\lm^2_4=\lm_1\lm_3+\lm_2\lm_4=0$ hold;
\item %7
The manifold has vanishing $\rho$, \ie it is Ricci-flat,(respectively,
$\rho^*$, $\dot{\rho}$, $\dot{\rho^*}$, $\ddot{\rho}$,
$\ddot{\rho^*}$, $\dddot{\rho}$, $\dddot{\rho^*}$ vanish)
    if and only if $(G,\ff,\xi,\eta,g)$ is flat, $R=0$.
\end{enumerate}
\end{prop}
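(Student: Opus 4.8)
The plan is to deduce all eight assertions directly from the explicit data already gathered in Sections~\ref{mi:sec3}--\ref{mi:sec4}; the argument is then an organized reading of the connection tables \eqref{mi:nabli}, \eqref{mi:fB-F7}, \eqref{mi:fcanon-F7}, the curvature tables \eqref{mi:R-F7}, \eqref{mi:K-fB-F7}, \eqref{mi:RfB-F7}, the Ricci and scalar-curvature tables \eqref{mi:ro}--\eqref{mi:taofKT-F7}, \eqref{mi:ro*-F7}--\eqref{mi:tao*-fiKT-F7}, \eqref{mi:rofB-F7}--\eqref{mi:tao*fB-F7}, and the sectional-curvature tables \eqref{mi:sec.curv-R}, \eqref{mi:sec.curv-R-fiKT}, \eqref{mi:sec.curv-R-fiB}, with every statement about $\dddot{D}$ reduced to the one for $\dot{D}$ through \propref{prop:RR}.

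Assertions (1) and (2) are immediate once one recalls that the $\ff$-holomorphic basic planes are $\al_{13},\al_{24}$ and the $\xi$-planes are $\al_{51},\dots,\al_{54}$: for these planes \eqref{mi:sec.curv-R}, \eqref{mi:sec.curv-R-fiKT} and \eqref{mi:sec.curv-R-fiB} already display $k_{13}=k_{24}=0$ and $\dot{k}_{ij}=\ddot{k}_{ij}=0$, and $\dddot{k}_{ij}=\dot{k}_{ij}$ by \propref{prop:RR}; $\n$ is rightly left out of (2) since its $\xi$-sectional curvature is $\mu_1^2-\mu_2^2$. For (3)--(5) I would substitute the tabulated components into the defining proportionality or vanishing condition and solve the resulting linear system in the homogeneous combinations $\lm_1^2+\lm_2^2-\lm_3^2-\lm_4^2$, $\mu_1^2-\mu_2^2$, $\lm_1\lm_3+\lm_2\lm_4$, $\mu_1\mu_2$. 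In (4) the scalars $\tau$, $\dot{\tau}$, $\ddot{\tau}$ from \eqref{mi:tao}, \eqref{mi:taofB-F7}, \eqref{mi:taofKT-F7} weight $\mu_1^2-\mu_2^2$ by $-4,-8,-16$ against a common $-8(\lm_1^2+\lm_2^2-\lm_3^2-\lm_4^2)$, so their simultaneous vanishing forces both combinations to vanish; (5) is a division in \eqref{mi:tao*-F7}, \eqref{mi:tao*fB-F7}, \eqref{mi:tao*-fiKT-F7}; and for (3) I would impose $\rho^{*}_{ij}=\lambda g_{ij}$ componentwise and read the resulting identities off \eqref{mi:ro*-F7} (the off-diagonal entries forcing one relation and $\rho^{*}_{55}=\rho^{*}_{11}$ the other).

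Assertions (6)--(8) are where a short case analysis enters. For (6) compare \eqref{mi:nabli} with \eqref{mi:fB-F7} and \eqref{mi:fcanon-F7}: each of $\dot{D}, \ddot{D}, \dddot{D}$ differs from $\n$ only in the terms carrying $\mu_1,\mu_2$ (namely those with a factor $e_5$), so any of them coincides with $\n$ iff $\mu_1=\mu_2=0$, which by \secref{mi:sec3} is exactly membership in $\F_0$. For (7), setting the curvature tables to zero, the component $R_{1324}=-(\mu_1^2+\mu_2^2)$ of \eqref{mi:R-F7} --- and, for the remaining connections, the sums $\ddot{R}_{1212}+\ddot{R}_{1414}=4(\mu_1^2+\mu_2^2)$ and $\dot{R}_{1212}+\dot{R}_{1414}=2(\mu_1^2+\mu_2^2)$ from \eqref{mi:K-fB-F7}, \eqref{mi:RfB-F7} --- give $\mu_1=\mu_2=0$, hence $\F_0$, after which the surviving components reduce to $\lm_1^2+\lm_2^2-\lm_3^2-\lm_4^2=\lm_1\lm_3+\lm_2\lm_4=0$; the converse is a one-line substitution (using \eqref{Rd-ddd} for $\dddot{R}$). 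For (8) the implication from $R=0$ is trivial, and by (7) all four connections are then flat, so every listed Ricci-type tensor vanishes; for the converse it suffices to combine $\rho=0$ --- which via $\rho_{55}=4(\mu_1^2-\mu_2^2)$ in \eqref{mi:ro} gives $\mu_1^2=\mu_2^2$, then $\lm_1^2+\lm_2^2-\lm_3^2-\lm_4^2=0$ from $\rho_{11}$, then $\lm_1\lm_3+\lm_2\lm_4=-\mu_1\mu_2$ from $\rho_{13}$ --- with $\ddot{\rho}=0$, which via $\ddot{\rho}_{13}$ in \eqref{mi:rofKT-F7} gives $\lm_1\lm_3+\lm_2\lm_4=-2\mu_1\mu_2$; subtracting the two yields $\mu_1\mu_2=0$, and together with $\mu_1^2=\mu_2^2$ this forces $\mu_1=\mu_2=0$ and $\lm_1\lm_3+\lm_2\lm_4=0$, so $R=0$ by (7).

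The only genuinely delicate point throughout is bookkeeping in (7) and (8): one must conclude $\mu_1=\mu_2=0$ and not merely $\mu_1^2=\mu_2^2$. This is precisely why in (8) two connections whose Ricci tensors weight $\mu_1\mu_2$ differently (here $\n$ and $\ddot{D}$) have to be used, and why in (7) the decisive component is $R_{1324}$, which sees the definite quantity $\mu_1^2+\mu_2^2$ rather than the indefinite $\mu_1^2-\mu_2^2$ occurring in the other entries.
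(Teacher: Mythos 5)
Your proposal follows the same route as the paper's own proof: every assertion is read off the precomputed component tables of Sections~\ref{mi:sec3}--\ref{mi:sec4}, and every claim about $\dddot{D}$ is reduced to the corresponding one for $\dot{D}$ via \eqref{Rd-ddd} (\propref{prop:RR}). For items (1), (2), (4), (5), (6), (7) and (8) your added detail is correct and in fact more complete than the paper's one-line citations; in particular, your observation that in (7) one must use a component seeing the definite combination $\mu_1^2+\mu_2^2$ (namely $R_{1324}$, resp. $\ddot{R}_{1212}+\ddot{R}_{1414}$ and $\dot{R}_{1212}+\dot{R}_{1414}$), and that in (8) two Ricci tensors weighting $\mu_1\mu_2$ with different coefficients must be played against each other, are exactly the points the paper leaves implicit. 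Your reading of (8) as a statement about the simultaneous vanishing of all the listed tensors is also the only tenable one, since (as your own computation shows) $\rho=0$ alone does not force $R=0$.

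The one place where your sketch does not deliver the stated result is (3). Carrying out your own prescription on \eqref{mi:ro*-F7} and \eqref{mi:g}, the condition $\rho^*=\lambda g$ is equivalent to exactly the two relations you describe: the off-diagonal condition $\rho^*_{13}=0$, i.e. $2\left(\lm_1^2+\lm_2^2-\lm_3^2-\lm_4^2\right)+3\left(\mu_1^2-\mu_2^2\right)=0$, and the diagonal condition $\rho^*_{55}=\rho^*_{11}$, i.e. $\mu_1\mu_2+\frac{2}{5}\left(\lm_1\lm_3+\lm_2\lm_4\right)=0$. These two relations are strictly weaker than the three separate vanishings displayed in (3): the single linear relation $2A+3B=0$ between $A=\lm_1^2+\lm_2^2-\lm_3^2-\lm_4^2$ and $B=\mu_1^2-\mu_2^2$ does not force $A=B=0$ (take, say, $A=-3$, $B=2$, with the remaining parameters adjusted so that $\mu_1\mu_2=-\frac{2}{5}\left(\lm_1\lm_3+\lm_2\lm_4\right)$; then $\rho^*$ is a nonzero multiple of $g$ while the first two identities of (3) fail). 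So the step \emph{read the resulting identities off} glosses over the fact that your computation and the stated equivalence disagree: the displayed identities are sufficient but not necessary for proportionality. You should either record the pair of relations above as the correct necessary and sufficient condition, or explicitly flag the discrepancy with the statement; as written, your proof of (3) does not establish the claimed equivalence.
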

\begin{proof}
Equation~(\ref{Rd-ddd}) implies that the components of the Ricci
tensor, the value of the scalar curvature (respectively, their
associated quantities) and the basic sectional curvatures of the
$\ff$B-connection and $\ff$-canonical connection are equal on
$(G,\ff,\xi,\eta,g)$.

%$\dot{\rho}$, $\dot{\rho^*}$, $\dot{\tau}$, $\dot{\tau^*}$ and
%$\dot{k}$ for the $\ff$B-connection $\dot{D}$ and $\ddot{\rho}$,
%$\ddot{\rho^*}$, $\ddot{\tau}$, $\ddot{\tau^*}$ and $\ddot{k}$ for
%$\ff$-canonical connection $\dddot{D}$ are equal, respectively.

The statements (1) and (2) are corollaries of \eqref{mi:sec.curv-R},
\eqref{mi:sec.curv-R-fiKT}, \eqref{mi:sec.curv-R-fiB} and \eqref{Rd-ddd}.

By virtue of \eqref{mi:ro*-F7} and \eqref{mi:g}, we get that $\rho^*$ is proportional to $g$
if and only if the identities in (3) are valid.

Using \eqref{mi:tao}, \eqref{mi:taofKT-F7}, \eqref{mi:taofB-F7}  and \eqref{Rd-ddd}, we obtain the
statement (4).

From \eqref{mi:tao*-F7}, \eqref{mi:tao*fB-F7},
\eqref{mi:tao*-fiKT-F7}  and \eqref{Rd-ddd} we get immediately the statement (5).

The statement (6) follow from
\eqref{mi:nabli}, \eqref{mi:fB-F7} and \eqref{mi:fcanon-F7} because of $\mu_1=\mu_2=0$.

Equation~(\ref{mi:R-F7}) and (6) imply (7).

The statement (8) follows from \eqref{mi:ro}, \eqref{mi:rofKT-F7},
\eqref{mi:ro*-F7}, \eqref{mi:ro*-fiKT-F7}, \eqref{mi:rofB-F7}, \eqref{Rd-ddd} and (7).
\end{proof}

\begin{prop}
The following conditions are equivalent:
\begin{enumerate}
\item The manifold $(G,\ff,\xi,\eta,g)$ is an
isotropic-$\F_0$-manifold;
\item The scalar curvatures for $\n$,
$\dot{D}$, $\ddot{D}$ and $\dddot{D}$ are equal;
\item The vectors
$\n_{e_i}\xi \; (i=1,2,3,4)$ are isotropic;
\item The Nijenhuis
tensor $N$ is isotropic;
\item The torsion tensors of $\dot{D}$,
$\ddot{D}$ and $\dddot{D}$ are isotropic;
\item The sectional
curvatures of the $\xi$-sections with respect to $\n$ vanish;
\item The equality $\mu_1=\pm\mu_2$ is valid.
\end{enumerate}
\end{prop}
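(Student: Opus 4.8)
The plan is to prove the equivalence of the seven conditions by first reducing most of them to the single numerical identity $\mu_1^2=\mu_2^2$ (condition~(7)), which then serves as the hub through which all the other equivalences pass. Condition (1)$\Leftrightarrow$(7) is exactly the content of \propref{mm:prop:Man31} (items (1) and (4) there), and the equivalence (1)$\Leftrightarrow$(3) is likewise already recorded in that proposition; so those two come for free. It remains to connect (2), (4), (5) and (6) to (7).

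For (2)$\Leftrightarrow$(7): by \eqref{mi:tao}, \eqref{mi:taofB-F7}, \eqref{mi:taofKT-F7} together with \eqref{Rd-ddd} (which gives $\dddot{\tau}=\dot{\tau}$), the four scalar curvatures are
$\tau=-8A-4B$, $\dot\tau=\dddot\tau=-8A-8B$, $\ddot\tau=-8A-16B$, where $A=\lm_1^2+\lm_2^2-\lm_3^2-\lm_4^2$ and $B=\mu_1^2-\mu_2^2$; these four values coincide precisely when $B=0$, i.e. when $\mu_1=\pm\mu_2$. For (6)$\Leftrightarrow$(7): the $\xi$-sectional curvatures of $\n$ are $k_{51}=k_{52}=k_{53}=k_{54}=\mu_1^2-\mu_2^2$ by \eqref{mi:sec.curv-R}, so they vanish iff $B=0$. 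For (5)$\Leftrightarrow$(7): from \eqref{mi:norm dot{T}}, \eqref{mi:norm ddot{T}} and \eqref{mi:norm dddot{T}} the square norms of the torsions of $\dot D$, $\ddot D$, $\dddot D$ are $16B$, $20B$, $32B$ respectively, each of which is zero iff $B=0$, which is exactly what ``the torsion tensor is isotropic'' means. For (4)$\Leftrightarrow$(7): by \eqref{mi:norm N} one has $\nN=64\left(\mu_1^2-\mu_2^2\right)=64B$, so $N$ is isotropic iff $B=0$.

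Having established that (2), (4), (5), (6) and (7) are each equivalent to $B=\mu_1^2-\mu_2^2=0$, and that (1), (3), (7) are mutually equivalent by \propref{mm:prop:Man31}, the full cycle of equivalences follows. I would write the argument simply as a chain: (7) is the common reformulation of all of them, citing the displayed formulas one by one, and invoking \propref{mm:prop:Man31} for the (1)$\Leftrightarrow$(3)$\Leftrightarrow$(7) part. No genuine obstacle is expected here; the only mild subtlety is making explicit that ``isotropic'' for a tensor means precisely vanishing of its square norm with respect to the (indefinite) metric $g$, so that the equivalences with (4) and (5) are literal consequences of the already-computed norms rather than requiring the tensors themselves to vanish. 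Everything else is a matter of reading off the sign patterns in \eqref{mi:tao}--\eqref{mi:taofKT-F7}, \eqref{mi:sec.curv-R}, \eqref{mi:norm dot{T}}, \eqref{mi:norm N}, \eqref{mi:norm dddot{T}} and quoting \eqref{Rd-ddd} to handle $\dddot D$ in parallel with $\dot D$.
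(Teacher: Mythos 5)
Your proposal is correct and follows essentially the same route as the paper: (1), (3), (7) are handled by \propref{mm:prop:Man31}, and (2), (4), (5), (6) are each reduced to $\mu_1^2=\mu_2^2$ via \eqref{mi:tao}, \eqref{mi:taofKT-F7}, \eqref{mi:taofB-F7}, \eqref{Rd-ddd}, \eqref{mi:norm N}, the torsion norms and \eqref{mi:sec.curv-R}. The only blemish is that you swapped the values $\norm{\dot{T}}=20\left(\mu_1^2-\mu_2^2\right)$ and $\norm{\ddot{T}}=16\left(\mu_1^2-\mu_2^2\right)$, which does not affect the argument.
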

\begin{proof}
According to \propref{mm:prop:Man31} the conditions (1), (3) and
(7) are equivalent. The other conditions are equivalent of
(7), bearing in mind \eqref{mi:tao}, \eqref{mi:taofKT-F7}, \eqref{mi:taofB-F7}, \eqref{Rd-ddd} for (2);
\eqref{mi:norm N} for (4); \eqref{mi:norm dot{T}}, \eqref{mi:norm ddot{T}},
\eqref{mi:norm dddot{T}} for
(5) and \eqref{mi:sec.curv-R} for (6).
\end{proof}

%\section{References}

\end{document}